\documentclass[12pt]{amsart}
\usepackage{amsmath, amssymb}
\usepackage[T2A]{fontenc}
\usepackage{geometry}
\usepackage{amsaddr}

\usepackage{color,graphicx}

\usepackage{url}
\usepackage[colorlinks = true,
linkcolor=blue,
urlcolor=blue,
citecolor=blue,
anchorcolor=blue,
backref=page]{hyperref}
\usepackage{tikz}

\newtheorem{theorem}{Theorem}[section]
\newtheorem{lemma}{Lemma}[section]

\theoremstyle{definition}

\newtheorem{remark}{Remark}[section]

\begin{document}
	
	\author{A.\,A.~Egorov}
	\address{Sobolev Institute of Mathematics, Siberian Branch of Russian Academy of Sciences, Novosibirsk,	Russia} 
	\email{a.egorov2@g.nsu.ru}
	
	\title[Lower bounds for right-angled hyperbolic polyhedra]{On lower bounds for the number of ideal and finite vertices of right-angled hyperbolic polyhedra in dimensions from 5 to 12}
	
	\maketitle 
	
	\begin{abstract}
		We investigate lower bounds for the number of ideal and finite vertices of right-angled hyperbolic polyhedra of finite volume. We use a geometric method of orthogonal gluings to establish new bounds in low dimensions, specifically  $v_\infty(P^5) \ge 3$ and $v_{fin}(P^7) \ge 4$. By combining these initial bounds with double counting arguments and recurrence relations, we obtain improved lower bounds for both types of vertices in all higher dimensions up to $n=12$, the maximal dimension where polyhedra of this class exist.
	\end{abstract}
	
	\smallskip
	\noindent \textbf{Keywords.} right-angled polyhedra, hyperbolic space, ideal vertices.
	
	\let\thefootnote\relax\footnote{The author was supported by the state task of Sobolev Institute of Mathematics, project No. FWNF-2026-0011.}
	
	\section{Introduction}
	
	Classification and study of geometric and combinatorial characteristics of convex polyhedra in Lobachevsky spaces $\mathbb{H}^n$ is a classical and actively developing problem of hyperbolic geometry. A convex polyhedron is called right-angled if all its dihedral angles are equal to $\pi/2$. Unlike spherical and Euclidean geometries, where only parallelepipeds (or simplexes in the spherical case) can be right-angled, hyperbolic geometry allows the existence of a rich variety of right-angled polyhedra, but only in low dimensions. In dimension $n=3$, Andreev's theorem~\cite{And70,And70_2} gives a complete combinatorial description of right-angled polyhedra of finite volume, showing that their topological diversity is infinite. In a recent paper~\cite{EV2025} it is shown that among all three-dimensional hyperbolic polyhedra, a triangular bipyramid with two finite and three ideal vertices has the minimal volume.
	
	With the growth of dimension $n$, the combinatorial structure of right-angled polyhedra becomes increasingly rigid. In the work of E.\,B.~Vinberg~\cite{Vin84} it was proved that compact Coxeter polyhedra -- polyhedra with angles of the form $\tfrac{\pi}{k_i}$, where $k_i$ are integers, do not exist in dimensions $n \ge 30$. At the same time, examples of such polyhedra are known only in dimensions no more than 8. Later, A.\,G.~Khovanskii~\cite{Kho86} and M.\,N.~Prokhorov~\cite{Pr86} showed that Coxeter polyhedra of finite volume do not exist in dimensions $n \ge 996$. At the same time, examples of such polyhedra are known only in dimensions no higher than $n = 21$.
	
	For a narrower class of right-angled polyhedra, the dimension bounds are significantly stricter. L.~Potyagailo and E.\,B.~Vinberg~\cite{PV05} proved that compact right-angled polyhedra do not exist for $n \ge 5$, and right-angled polyhedra of finite volume for $n \geq 15$. Subsequently, G.~Dufour~\cite{Dufour2010}, developing ideas from the work of Potyagailo and Vinberg, improved the bound for polyhedra of finite volume, proving their absence for $n \geq 13$. We also note the result of A.~Kolpakov~\cite{Ko2012}, who established the absence of ideal right-angled polyhedra for $n \geq 7$. Examples of right-angled polyhedra of finite volume are known only in dimensions no higher than $n = 8$.
	
	The key structural parameters determining the possibility of the existence of right-angled polyhedra of finite volume in higher dimensions are the numbers of their ideal and finite vertices. Lower bounds on these quantities are closely interconnected: an improvement of the lower bound for the number of ideal vertices in dimension $n$ entails an improvement of the lower bound for the number of facets in dimension $n+1$ via Dufour's recurrence relations~\cite{Dufour2010}, and this, in turn, improves the lower bounds on the number of finite vertices via the double counting lemma. Due to the multiplicative nature of the double counting lemmas, even a small strengthening of the bound in the base dimension generates significant nonlinear growth of the final bounds upon transition to higher dimensions.
	
	In the present work, we propose a unified method of orthogonal gluings, allowing us to solve this problem and obtain new base bounds simultaneously for both types of vertices. The main results of the article are the following two theorems.
	
	\begin{theorem} \label{thm:main_ideal_intro}
		Let $P \subset \mathbb{H}^5$ be a right-angled hyperbolic polyhedron of finite volume. Then the number of its ideal vertices $v_\infty(P) \ge 3$. 
	\end{theorem}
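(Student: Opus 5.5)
We argue by contradiction. Since compact right-angled polyhedra do not exist in $\mathbb{H}^5$ (Potyagailo--Vinberg~\cite{PV05}), we have $v_\infty(P)\ge 1$. So the two cases to exclude are $v_\infty(P)=1$ and $v_\infty(P)=2$.

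The tool is the orthogonal gluing. Take a facet $F$ of $P$ and reflect $P$ in the hyperplane of $F$, obtaining $P'=P\cup r_F(P)$. Because all dihedral angles are $\pi/2$, every facet $G$ adjacent to $F$ is orthogonal to $F$. Hence $G$ and $r_F(G)$ lie in the same hyperplane and merge into a single facet of $P'$. Consequently $P'$ is again a right-angled polyhedron of finite volume in $\mathbb{H}^5$. Its ideal vertices are of two kinds:
\begin{itemize}
\item ideal vertices of $P$ lying on $F$, which remain single ideal vertices of $P'$;
\item ideal vertices of $P$ not on $F$, which are doubled.
\end{itemize}
Finite vertices behave in the same way. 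By choosing $F$ through an ideal vertex, we control the count precisely. More usefully, we look at the $4$-dimensional facets themselves. Each facet $F$ is a right-angled polyhedron of finite volume in $\mathbb{H}^4$. Its ideal vertices are exactly the ideal vertices of $P$ lying on $F$. Every ideal vertex of $P$ has a cusp whose link is a Euclidean right-angled $4$-dimensional polytope, i.e.\ a rectangular box. It therefore lies on exactly $8$ facets, and misses all the others.

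The key steps are as follows.
\begin{enumerate}
\item Consider a facet $F$ of $P$ containing no ideal vertex of $P$. Such a facet must exist when $v_\infty\le 2$. Indeed, by Dufour's bounds (or the Potyagailo--Vinberg counting) a right-angled $5$-polyhedron has many more than $16$ facets, while at most $2\cdot 8=16$ facets meet the ideal vertices.
\item Observe that $F$ is then a compact right-angled polyhedron in $\mathbb{H}^4$, and use the known constraints on such polyhedra. By the Potyagailo--Vinberg argument, a compact right-angled $4$-polyhedron has at least $120$ vertices and many facets. Feeding this into the double counting lemma (count pairs (finite vertex, facet containing it), each finite vertex lying on exactly $5$ facets), we obtain a lower bound on $v_{fin}(P)$.
\item Bound $v_{fin}(P)$ from above via Euler-type relations in the spirit of Nikulin's averaging, which Potyagailo--Vinberg use to exclude compact right-angled polyhedra for $n\ge5$. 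In their argument, the average number of $2$-faces of a $k$-face is too large in dimension $5$. Ideal vertices provide slack: each cusp contributes a Euclidean box link, which modifies the Euler characteristic identity by a bounded amount per ideal vertex.
\item Show that with $v_\infty\le 2$ this slack is insufficient, which yields a contradiction.
\end{enumerate}

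An alternative, more geometric route is to glue copies of $P$ orthogonally along facets through the cusps, producing a right-angled polyhedron in which the two ideal vertices can be separated. One then derives a contradiction with the inequality for $P$ itself.

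The main obstacle is step 3. Making the Nikulin-type averaging sharp enough in the presence of one or two cusps requires a careful estimate of how ideal vertices enter the face-number identities. This is a correction to the Dehn--Sommerville/Euler relation by a term depending on the combinatorics of the cube link, and it must be bounded above. I would first try the Potyagailo--Vinberg inequality on the average number of edges of $2$-faces. Each ideal vertex removes at most a fixed number of "bad" incidences, and the compact case leaves a deficit growing with the number of facets. With the facet lower bound from Dufour, this deficit should exceed the contribution of two cusps.
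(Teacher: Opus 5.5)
Your proposal has a genuine gap: the contradiction is never produced. The whole argument rests on steps 3 and 4, which you leave as an expectation (``this deficit should exceed the contribution of two cusps''). The route as designed is also unlikely to close.

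\begin{itemize}
\item A facet with no ideal vertex is a compact right-angled $4$-polyhedron, and such polyhedra exist (the right-angled $120$-cell). So step 2 gives no contradiction by itself; at best it gives an absolute lower bound on $v_{fin}(P)$.
\item The Nikulin--Khovanskii inequality (Theorem~\ref{thm:nikulin}) bounds only averages $a_k^l(P)$, and for $n=5$ only with $k\le\lceil 5/2\rceil=3$. It says nothing about $4$-dimensional facets and gives no absolute upper bound on $v_{fin}(P)$.
\end{itemize}

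So steps 2 and 3 never meet. The case $v_\infty(P)=1$ is also left to the same unspecified estimate.

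The missing idea is to work with $3$-faces, where both sides of a sharp inequality are available. Theorem~\ref{thm:nikulin} with $k=3$, $l=2$ gives $a_3^2(P)<12$. Nonaka's Lemma~\ref{lem:nonaka_bound} says a right-angled $3$-polyhedron of finite volume with at most one ideal vertex has at least $12$ two-dimensional faces.

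\begin{itemize}
\item If $v_\infty(P)\le 1$, every $3$-face has at most one ideal vertex, and these two bounds contradict each other directly. This also makes the appeal to Potyagailo--Vinberg unnecessary.
\item If $v_\infty(P)=2$ with cusps $u,v$, your ``alternative route'' is the right idea, but it must be made precise. Let $F_{min}$ be the minimal face containing both $u$ and $v$. Its dimension is $d\ge 1$, so $u,v$ lie jointly in exactly $5-d\le 4$ facets.
\item Gluing along one of these facets keeps $v_\infty=2$, since both cusps lie on the mirror and nothing is doubled. That facet becomes interior and leaves the list of facets containing both cusps, and no new common facet appears.
\item After at most three such gluings, $u$ and $v$ share at most one facet, hence no common $3$-face. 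Nonaka plus Nikulin--Khovanskii, applied to the glued polyhedron $\tilde P$ rather than to $P$ itself, then give the contradiction.
\end{itemize}
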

	
	\begin{theorem} \label{thm:main_fin_intro}
		Let $P \subset \mathbb{H}^7$ be a right-angled hyperbolic polyhedron of finite volume. Then the number of its finite vertices $v_{fin}(P) \ge 4$.
	\end{theorem}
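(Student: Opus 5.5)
The plan is to argue by contradiction, using the method of orthogonal gluings that gives Theorem~\ref{thm:main_ideal_intro}. Suppose $P\subset\mathbb{H}^7$ is right-angled, of finite volume, and has $v_{fin}(P)\le 3$.

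\textbf{Step 1: pass to a facet.} Every facet $F$ of $P$ is itself a right-angled polyhedron of finite volume in $\mathbb{H}^6$. Its finite vertices are exactly the finite vertices of $P$ lying on $F$, so $v_{fin}(F)\le v_{fin}(P)$.

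\textbf{Step 2: gluing.} Since $P$ is right-angled, reflecting $P$ in a facet $F$ gives the double $P\cup r_F(P)$. This is again a right-angled polyhedron of finite volume. Its facets adjacent to $F$ are glued into single facets, and $F$ disappears. The finite vertices of the double are the finite vertices of $P$ not on $F$, each taken twice. So if we choose $F$ to contain many finite vertices, the count is controlled.

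\textbf{Step 3: the core estimate.} What I actually want is a lower bound for how finite vertices are distributed over facets. A simple vertex of a right-angled polyhedron lies on exactly $7$ facets. Each facet of $P$ contains either a finite vertex or only ideal vertices. A facet with no finite vertices would be an ideal right-angled polyhedron in $\mathbb{H}^6$. By Kolpakov's result such polyhedra can exist in dimension $6$, since nonexistence is only for $n\ge 7$, so this case is not excluded outright. I therefore need to bound the number of facets of $P$ from below by Dufour's recurrences, together with $v_\infty\ge 3$ in dimension $5$ (Theorem~\ref{thm:main_ideal_intro}) propagated upward. Then I double-count incidences (finite vertex, facet) and (ideal vertex, facet) to show that at most $3\cdot 7=21$ facets can meet finite vertices.

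\textbf{Step 4: the contradiction.} From Step 3, the remaining facets are ideal $6$-dimensional right-angled polyhedra, or they share the same few finite vertices. Applying the orthogonal gluing along a facet through all (at most $3$) finite vertices produces a polyhedron with fewer finite vertices, and iterating yields a compact-free configuration. Concretely, I would find a $5$-face $G$ (an intersection of two facets) through all the finite vertices. Via the orthogonal-gluing construction, $G$ then relates to a $5$-dimensional right-angled polyhedron with too few ideal or finite vertices, contradicting Theorem~\ref{thm:main_ideal_intro} or the known fact that the $7$-dimensional case forces compact faces of dimension $\le 4$ (Potyagailo–Vinberg). The cases $v_{fin}=0,1,2,3$ will be handled separately. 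The case $0$ is Kolpakov's theorem. Cases $1$ and $2$ follow since all finite vertices lie on one edge or one facet, so the gluing reduces to the ideal case.

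\textbf{Main obstacle.} The hard part is $v_{fin}=3$. Three finite vertices need not lie in a common low-dimensional face, so I would first try to show that they pairwise share at least $5$ facets, by counting their $7$ facets each against the total number of facets. From that, a common face of dimension $\ge 1$ through all three exists, which makes the gluing reduction available.
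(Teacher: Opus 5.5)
Your gluing step is the right tool, and your case $v_{fin}=1$ is fine: the single finite vertex lies on some facet, and doubling along that facet leaves an ideal right-angled polyhedron in $\mathbb{H}^7$, contradicting Kolpakov. The gap is in the cases $v_{fin}=2$ and $v_{fin}=3$. There you need finite vertices to share a facet, and nothing in your plan produces this.

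\textbf{Case $v_{fin}=2$.} You assert that the two finite vertices lie on a common facet, but this does not hold automatically. Two vertices of a convex polyhedron need not share a facet (opposite vertices of a cube, for example). If $u,v$ share no facet, doubling along a facet through $u$ gives a polyhedron with two finite vertices again ($v$ and its mirror image), so there is no progress.

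\textbf{Case $v_{fin}=3$.} Your proposed fix is to count the $7$ facets at each vertex against the total number of facets, but this runs the wrong way. Dufour's bound gives $P$ at least $46$ facets. A large number of facets makes it easier, not harder, for the three $7$-element facet sets to be disjoint. So no pigeonhole overlap, let alone ``pairwise at least $5$ common facets'', can come out of that count.

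The other ingredients do not supply this either. The detours through Dufour's recurrences, Theorem~\ref{thm:main_ideal_intro}, a $5$-face $G$, and Potyagailo--Vinberg are never actually carried out, and the proof does not need them.

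\textbf{The missing idea: a parity constraint.} In a right-angled $3$-polyhedron of finite volume, finite vertices have degree $3$ and ideal vertices degree $4$. Hence $2E=3v_{fin}+4v_\infty$, so $v_{fin}$ is even. Every $3$-face of $P$ is such a polyhedron, and its finite vertices are exactly the finite vertices of $P$ lying in it. Therefore any $3$-face through a finite vertex $u$ (take $4$ of the $7$ facets at $u$) must contain another finite vertex. This settles the remaining cases:
\begin{itemize}
\item $v_{fin}=1$ is excluded immediately.
\item For $v_{fin}=2$, $u$ and $v$ lie in a common $3$-face, hence on a common facet $H_1$. Doubling along $H_1$ gives $v_{fin}=0$, contradicting Kolpakov.
\item For $v_{fin}=3$, a $3$-face through $u$ contains exactly one of $v,w$, say $v$. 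Doubling along a facet $H_1$ containing $u$ and $v$ leaves either $0$ finite vertices (if $w\in H_1$) or exactly $2$ (namely $w$ and $\rho_{H_1}(w)$). Both outcomes were already excluded.
\end{itemize}
With this lemma your gluing argument closes the proof; without it, the cases $2$ and $3$ are unsupported.
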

	
	The first theorem improves Alexandrov's bound $v_\infty(P^5) \geq 2$~\cite{Ale23}. The second theorem, relying in its proof on Kolpakov's theorem, establishes a new strict base for finite vertices. Both results launch a recurrent process that gives significantly improved lower bounds for all dimensions $6 \leq n \leq 12$, presented in Tables~\ref{tab:comparison} and~\ref{tab:comparison_fin}.
	
	The structure of the present article is as follows. Section 2 provides the necessary preliminary information about right-angled polyhedra and formulates lemmas on face combinatorics, upon which the further proof relies. Section 3 contains the proof of Theorem~\ref{thm:main_ideal_intro} for the 5-dimensional space and the calculation of recurrent bounds for higher dimensions. In Section 4, Theorem~\ref{thm:main_fin_intro} about the minimal number of finite vertices for dimension 7 is proved, and a cascade of new restrictions on $v_{fin}(P^n)$ for $n \ge 8$ is also calculated.
	
	\section{Preliminaries}
	
	Let $\mathbb{H}^n$ be the $n$-dimensional hyperbolic space, and $\partial\mathbb{H}^n$ be its boundary (absolute). A convex polyhedron $P \subset \mathbb{H}^n$ is the intersection of a finite number of closed hyperbolic half-spaces, having a non-empty interior. In this work, we consider only polyhedra of finite volume. The vertices of the polyhedron $P$ lying in $\mathbb{H}^n$ are called finite, and the vertices lying on the absolute $\partial\mathbb{H}^n$ are called ideal. We denote the number of ideal vertices of the polyhedron $P$ by $v_\infty(P)$.
	
	A face of dimension $k$ (or $k$-dimensional face) of a polyhedron $P$ is a non-empty intersection of $P$ with a set of hyperplanes bounding these half-spaces, if the dimension of this intersection is equal to $k$. Faces of codimension 1 (that is, of dimension $n-1$) are called facets.
	
	A polyhedron $P$ is called right-angled if the dihedral angle between any two intersecting facets is equal to $\pi/2$. The class of all right-angled hyperbolic polyhedra of finite volume in $\mathbb{H}^n$, having at least one ideal vertex, will be denoted by $\mathcal{P}^n$. For $P \in \mathcal{P}^n$, we denote by $a_k(P)$ the number of $k$-dimensional faces, where $0 \le k \le n-1$. In particular, $a_{n-1}(P)$ is the number of facets.
	
	The local structure of a polyhedron $P \in \mathcal{P}^n$ in a neighborhood of a finite vertex of an $n$-dimensional right-angled polyhedron $P$ is a cone over an $(n-1)$-dimensional simplex, and in a neighborhood of an ideal vertex is a cone over an $(n-1)$-dimensional parallelepiped (see~\cite{Dufour2010}). As a consequence, a right-angled polyhedron $P$ (as, in fact, any acute-angled one -- see~\cite{AVS}) is simple at edges (each edge belongs to exactly $n-1$ facets). This at the same time means that any $k$-face $F^k$ is the intersection of exactly $n-k$ facets and is not contained in any other facets of the polyhedron (see~\cite{AVS}). Moreover, this means that exactly $n$ facets converge at each finite vertex of a right-angled $n$-polyhedron, and $2(n-1)$ facets at each ideal one.
	
	Recall the classical inequalities of V.\,V.~Nikulin, generalized by A.\,G.~Khovanskii to the case of polyhedra simple at edges.
	
	\begin{theorem}[Nikulin--Khovanskii inequality, \cite{Nik81, Kho86}] \label{thm:nikulin}
		Let $P$ be an $n$-dimensional polyhedron, simple at edges. Denote by $a_k^l(P)$ the average number of $l$-dimensional faces contained in one $k$-dimensional face of the polyhedron $P$, where $0 \leq l < k \leq \lceil n/2 \rceil$. Then
		\begin{equation}
			a_k^l(P) < \binom{n-l}{n-k} \frac{\binom{\lfloor n/2 \rfloor}{l} + \binom{\lceil n/2 \rceil}{l}}{\binom{\lfloor n/2 \rfloor}{k} + \binom{\lceil n/2 \rceil}{k}}.
		\end{equation}
	\end{theorem}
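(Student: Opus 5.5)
The plan is to reduce the statement to a weighted-average inequality for an ``$h$-vector''. I treat the simple compact case first, as in Nikulin, and then indicate how Khovanskii's argument removes the simplicity and compactness assumptions, keeping only simplicity at edges.

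\emph{Step 1: reduction to face numbers.} If $P$ is simple at edges, every $l$-face with $l\ge 1$ lies in exactly $\binom{n-l}{k-l}=\binom{n-l}{n-k}$ faces of dimension $k$. This holds because the link of an $l$-face is a simplex. Counting incident pairs (an $l$-face contained in a $k$-face) in two ways gives
$$a_k^l(P)=\binom{n-l}{n-k}\frac{a_l(P)}{a_k(P)} .$$
So it suffices to prove
$$\frac{a_l}{a_k}<\frac{\binom{\lfloor n/2\rfloor}{l}+\binom{\lceil n/2\rceil}{l}}{\binom{\lfloor n/2\rfloor}{k}+\binom{\lceil n/2\rceil}{k}} .$$
For $l=0$ the identity fails at ideal or non-simple vertices. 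There I would instead count pairs (vertex, $k$-face) directly; the count per vertex is then at least $\binom{n}{k}$ rather than equal to it. I would check that the inequality goes in the needed direction, or else run the Step 2 argument with vertices replaced by ``corners''.

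\emph{Step 2: the $h$-vector.} Choose a generic linear (height) function on the projective model of $P$. For a simple compact $P$, let $h_i$ be the number of vertices having exactly $i$ edges going downward. Each $k$-face has a unique lowest vertex, and this gives $a_k=\sum_i \binom{i}{k} h_i$. Reversing the height function gives the Dehn--Sommerville symmetry $h_i=h_{n-i}$. Folding the sum yields
$$a_k=\sum_{i\le n/2} h_i\,c_i\Bigl(\binom{i}{k}+\binom{n-i}{k}\Bigr),$$
where $c_i$ is a weight equal to $1$, except $c_{n/2}=\tfrac12$ when $n$ is even. Hence $a_l/a_k$ is a weighted average, with positive weights, of the ratios
$$r_i=\frac{\binom{i}{l}+\binom{n-i}{l}}{\binom{i}{k}+\binom{n-i}{k}},\qquad 0\le i\le n/2 .$$

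\emph{Step 3: monotonicity.} For $l<k\le\lceil n/2\rceil$, I show that $r_i$ is strictly decreasing in $i$ as $i$ moves from $0$ toward $n/2$. I read ``decreasing'' as the ratio shrinking as the two binomials become balanced. More precisely, the claim is that $r_i$ exceeds the target value at $i=\lfloor n/2\rfloor$ only in a way compensated by the weights. I expect this calculation to be the main obstacle: the direction must be checked carefully. Nikulin's actual route uses that the maximum of $r_i$ over the allowed range is attained at the middle index. The strict inequality then comes from $h_0=1>0$ contributing a term whose ratio $r_0$ is strictly smaller, since $\binom{0}{k}=0$ for $k\ge1$ forces $r_0=\binom{n}{l}/\binom{n}{k}$. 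I would first try to prove $r_i\le r_{\lfloor n/2\rfloor}$ by writing $r_i$ as a ratio of two symmetric polynomials in $(i,n-i)$ and applying a log-concavity (Chebyshev-type) inequality for binomial coefficients.

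\emph{Step 4: general case (Khovanskii).} For polyhedra that are only simple at edges, possibly with ideal vertices, I replace vertices by ``corners'' of the height function. A corner is a pair consisting of a vertex and a simplicial cone of directions at it; for ideal vertices I would use the cone over the vertex link. The lowest point of a $k$-face is then attained at such a corner. Defining $h_i$ as the number of corners with $i$ downward directions recovers the formula for $a_k$ when $k\ge1$. The symmetry $h_i=h_{n-i}$ follows from the Euler-characteristic argument applied face by face, which is exactly where simplicity at edges is used. Steps 2 and 3 then apply verbatim.
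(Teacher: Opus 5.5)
The paper gives no proof of this statement (it is quoted from Nikulin and Khovanskii), so I assess your proposal on its own. Steps 1--2 are the standard setup for simple polytopes and are fine. The core inequality of Step 3, however, is never proved, and the direction you first state is backwards. In fact $r_i$ \emph{increases} as $i$ runs from $0$ to $\lfloor n/2\rfloor$: for $n=5$, $l=2$, $k=3$ one gets $r_0=1$, $r_1=3/2$, $r_2=4$. So $r_0$ is the smallest ratio and the middle one the largest. You reach the correct claim only at the end, without proof, and the ``compensated by the weights'' version is not a precise statement.

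Here is a short argument. With $S_j(i)=\binom{i}{j}+\binom{n-i}{j}$, write $r_i=\prod_{j=l}^{k-1}S_j(i)/S_{j+1}(i)$; all factors are positive since $n-i\ge k$. Observe that $(j+1)S_{j+1}(i)/S_j(i)$ is the mean of $i-j$ and $n-i-j$ with weights $\binom{i}{j}$ and $\binom{n-i}{j}$. Increasing $i$ by one (while $i+1\le n/2$) lowers the larger value by one and raises the smaller by one. It also shifts weight toward the smaller value, while that value's share stays at most $1/2$. Hence this mean is nonincreasing, every factor of $r_i$ is nondecreasing, and $r_i\le r_{\lfloor n/2\rfloor}$.

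Strictness $r_0<r_{\lfloor n/2\rfloor}$ requires a factor with $j\ge 1$, i.e.\ $k\ge 2$. For $(l,k)=(0,1)$ one has $r_i\equiv 2/n$. Your formula for $r_0$ is also wrong when $l=0$: it is $2/\binom{n}{k}$. Worse, the statement as written is actually false in this case. Every edge of a polytope, or of a finite-volume polyhedron, has two vertices, so $a_1^0=2$, which equals the right-hand side. That case must be excluded.

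The more serious gap is Step 4, which is precisely the case the paper uses: $5$-dimensional right-angled polyhedra with ideal, hence non-simple, vertices, with $l=2$, $k=3$. No definition of ``corners'' can deliver what you claim. The relations $a_k=\sum_i\binom{i}{k}h_i$ for $1\le k\le n-1$, together with $h_i=h_{n-i}$, force linear relations among $a_1,\dots,a_{n-1}$ that polytopes simple at edges need not satisfy. For $n=4$, using only $h_1=h_3$, they give $a_2=a_3+a_1/2$. But the pyramid over a $3$-cube is simple at edges: each edge lies in exactly three facets, and the apex has a cube as vertex figure. It has $(a_1,a_2,a_3)=(20,18,7)$, and $18\neq 17$.

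Your fallback for $l=0$ also points the wrong way. Each vertex lies in \emph{at least} $\binom{n}{k}$ faces of dimension $k$, which yields $a_k^0\ge\binom{n}{k}a_0/a_k$, a lower bound rather than an upper one. So beyond simple polytopes you need a genuinely different mechanism that uses only the structure along edges. One candidate is generic hyperplane sections $P\cap\{\phi=t\}$, which are simple $(n-1)$-polytopes exactly because $P$ is simple at edges. As it stands, even with Step 3 supplied as above, the proposal establishes the inequality only for simple polytopes and $k\ge 2$.
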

	
	Nonaka investigated three-dimensional right-angled hyperbolic polyhedra with one ideal vertex and obtained the following result.
	
	\begin{lemma}[Nonaka, \cite{Nonaka}] \label{lem:nonaka_bound}
		If $P^3$ is a right-angled hyperbolic 3-polyhedron of finite volume, and the number of its ideal vertices satisfies the condition $v_\infty(P^3) \le 1$. Then the number of its two-dimensional faces $a_2(P^3)$ is bounded from below as $a_2(F^3) \ge 12$.
	\end{lemma}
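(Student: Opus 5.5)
We would prove Lemma~\ref{lem:nonaka_bound} by combining Euler's formula with the local geometry of faces and Andreev's combinatorial conditions~\cite{And70,And70_2}. First we treat the compact case $v_\infty(P^3)=0$. Here $P^3$ is simple, so $2E=3V$, and Euler's formula gives
\[
\sum_k (6-k)p_k = 12,
\]
where $p_k$ is the number of $k$-gonal faces. Every face is a compact right-angled hyperbolic polygon. By Gauss--Bonnet, a $k$-gon with all angles $\pi/2$ has area $(k-2)\pi - k\pi/2>0$, which forces $k\ge 5$. Hence the left-hand side is at most $p_5\le a_2(P^3)$, and we get $a_2(P^3)\ge 12$, with equality for the right-angled dodecahedron.

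Next we treat $v_\infty(P^3)=1$. The ideal vertex $v$ has the local structure of a cone over a square, so it is $4$-valent and lies on exactly four faces $F_1,\dots,F_4$ in cyclic order. All other vertices are $3$-valent. Thus $2E=3(V-1)+4$, and Euler's formula now yields
\[
\sum_k (6-k)p_k = 6F-2E = 14 .
\]
We classify faces by Gauss--Bonnet again. A face with $j$ ideal vertices among its $k$ vertices needs $(k-j)\pi/2<(k-2)\pi$. Faces not containing $v$ therefore have $k\ge 5$, and the four faces $F_i$ through $v$ (with $j=1$) have $k\ge 4$. Let $q\le 4$ be the number of quadrilaterals among the $F_i$. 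Each such quadrilateral contributes $2$ to the sum, and every other face contributes at most $1$. Hence $14\le 2q+(a_2-q)$, that is, $a_2(P^3)\ge 14-q$. So it suffices to show $q\le 2$.

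The key step, and the main obstacle, is excluding three or four quadrilaterals around $v$ using Andreev's conditions. These say that a finite-volume right-angled polyhedron has no prismatic $3$-circuits, and no prismatic $4$-circuits other than the one formed by the faces around an ideal vertex. We would do a local case analysis.

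\emph{Two adjacent quadrilaterals.} Suppose $F_1$ and $F_2$ are both quadrilaterals; they share the edge from $v$ to a finite vertex $w$. Let $G$ be the third face at $w$. Each $F_i$ ($i=1,2$) has its other edge at $v$ lying on $F_4$ or $F_3$ respectively. We would check that the remaining edges of $F_1$ and $F_2$ then make $G$ meet $F_4$ and $F_3$ as well. This produces a short circuit, $F_3,G,F_4$ or $F_1,G,F_3$ with its companion, which is prismatic, or else forces two faces to meet in a disconnected set. Either outcome contradicts Andreev's theorem or convexity.

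\emph{Three quadrilaterals.} If three of the $F_i$ are quadrilaterals, two of them are adjacent, which is already excluded. Thus $q\le 2$, and the bound $a_2(P^3)\ge 12$ follows.

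If the adjacent-pair argument proves delicate, we would instead rule out configurations with $q\ge 3$ directly. Gluing the quadrilateral faces together around $v$ should force a prismatic $4$-circuit that differs from the vertex circuit $F_1F_2F_3F_4$, which Andreev's conditions forbid.
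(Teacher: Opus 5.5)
The paper gives no proof of this lemma (it is cited from Nonaka), so I am judging your proposal on its own. Your compact case is correct. The identity $\sum_k(6-k)p_k=14$ for one ideal vertex and the resulting bound $a_2(P^3)\ge 14-q$ are also correct.

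\textbf{The gap is the reduction to $q\le 2$: that statement is false, and the argument you sketch for it uses a wrong incidence.} Write $F_1=vwxy$ with $vw\subset F_2$ and $vy\subset F_4$. Its four sides lie on four distinct faces $F_2,G,H,F_4$, because three different faces meet at $x$. So the side $xy$ lies on a face $H\neq G$, and nothing makes $G$ meet $F_4$.

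In fact $G$ can never meet $F_3$ or $F_4$. The faces $F_2$ and $F_4$ are tangent at $v$. In the upper half-space model with $v=\infty$, a plane orthogonal to two parallel vertical planes is itself vertical. Hence any face adjacent to both $F_2$ and $F_4$ is $F_1$ or $F_3$. This is Andreev's extra condition at $4$-valent vertices, which your list omits. So no short circuit through $G$ arises.

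Adjacent quadrilaterals at $v$ do occur. In the right-angled dodecahedron, cut a pentagonal face $T$ by a new edge from a vertex $v$ of $T$ to an interior point of the opposite side, and make $v$ (now $4$-valent) ideal. The resulting $13$-face polyhedron has no prismatic $3$- or $4$-circuits. The only common neighbours of opposite faces at $v$ are the other two faces at $v$. By Andreev's theorem it is therefore realized as a right-angled polyhedron with one ideal vertex, and the two halves of $T$ are adjacent quadrilaterals at $v$.

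Cutting $T$ differently also works. Take an interior point of one side and join it by two edges to interior points of the two non-adjacent sides. This gives $q=3$ (with $14$ faces). So your fallback (``a prismatic $4$-circuit other than the vertex circuit'') cannot work for $q\ge 3$ either.

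What the proof actually needs is separate treatment of $q=4$ and $q=3$.

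\emph{Case $q=4$.} This configuration must be excluded. In the half-space picture over the rectangle $[0,a]\times[0,b]$, the four quadrilateral conditions $r_{12}^2+r_{41}^2=b^2,\dots$ force $a=b$. Disjointness of the non-adjacent diagonal corner hemispheres then forces all four radii to equal $a/\sqrt2$. The diagonal pairs become tangent at the centre of the square, which produces a second ideal point.

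\emph{Case $q=3$.} Your count gives only $a_2\ge 11$. Equality would force every other face to be a pentagon. Following these pentagons around the three quadrilaterals involves the corner faces $G_{12},G_{23},G_{34},G_{41}$, the fifth neighbour of $F_4$, and the common fifth neighbour of $G_{12}$ and $G_{23}$. The face structure closes up after ten faces and leaves a compact quadrilateral, which is impossible.

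Without these two arguments, your proof establishes $a_2(P^3)\ge 12$ only when $q\le 2$.
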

	
	From Theorem~\ref{thm:nikulin} for dimension $n=5$ ($l=2, k=3$) follows a strict upper bound $\bar{a}_2 = a_3^2(P^5) < 12$. A consequence of this restriction and Nonaka's result is a result from the preprint by S.\,A.~Alexandrov. Below, for completeness of presentation, its proof is also reproduced.
	
	\begin{lemma}[Alexandrov, \cite{Ale23}] \label{lem:alex_base}
		For any polyhedron $P^5 \in \mathcal{P}^5$, the inequality $v_\infty(P^5) \ge 2$ holds.
	\end{lemma}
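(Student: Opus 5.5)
Suppose, to the contrary, that $P = P^5 \in \mathcal{P}^5$ has $v_\infty(P)\le 1$. Since $P\in\mathcal{P}^5$ has at least one ideal vertex, $v_\infty(P)=1$; in fact the argument only needs $v_\infty(P)\le 1$. We will show that every $3$-dimensional face of $P$ then has at least $12$ two-dimensional faces. This contradicts the Nikulin--Khovanskii bound $a_3^2(P^5)<12$ from Theorem~\ref{thm:nikulin}.

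\textbf{Step 1: each $3$-face is a right-angled $3$-polyhedron of finite volume.} Take a $3$-face $F^3$ of $P$. It is the intersection of exactly two facets $H_1\cap H_2$ of $P$, and these facets are orthogonal. The facets of $F^3$ are the nonempty $2$-dimensional intersections $F^3\cap H$ with other facets $H$ of $P$. At any edge of $F^3$ exactly $n-1=4$ facets of $P$ meet, pairwise orthogonally, since $P$ is simple at edges. Two of these are $H_1,H_2$, and the other two, $H,H'$, cut out the two faces of $F^3$ along that edge. Their normals are mutually orthogonal and orthogonal to $H_1,H_2$. Hence the dihedral angle of $F^3$ between $F^3\cap H$ and $F^3\cap H'$ equals the angle between $H$ and $H'$, namely $\pi/2$. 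So $F^3$ is right-angled.

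It also has finite volume, as a face of a finite-volume polyhedron: it is the closure of a convex set in $\mathbb{H}^3$ whose closure in $\overline{\mathbb{H}^5}$ meets the absolute only at ideal vertices of $P$, which are finitely many points.

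\textbf{Step 2: each $3$-face has at most one ideal vertex.} The ideal vertices of $F^3$ are ideal vertices of $P$ lying in $F^3$. Indeed, a vertex of a face of $P$ is a vertex of $P$, and it is ideal exactly when it lies on $\partial\mathbb{H}^5$. Hence $v_\infty(F^3)\le v_\infty(P)\le 1$.

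\textbf{Step 3: the contradiction.} By Nonaka's Lemma~\ref{lem:nonaka_bound}, every $3$-face $F^3$ of $P$ satisfies $a_2(F^3)\ge 12$. Averaging over all $3$-faces gives $a_3^2(P)\ge 12$. On the other hand, Theorem~\ref{thm:nikulin} with $n=5$, $k=3$, $l=2$ (allowed since $3\le\lceil 5/2\rceil$) gives
\[
a_3^2(P)<\binom{3}{2}\frac{\binom{2}{2}+\binom{3}{2}}{\binom{2}{3}+\binom{3}{3}}=3\cdot\frac{4}{1}=12.
\]
This is a contradiction, so $v_\infty(P)\ge 2$.

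\textbf{Main obstacle.} The only delicate point is Step 1: checking carefully that faces of a right-angled polyhedron inherit right angles and finite volume. I would rely on the local structure recalled above. Near a finite vertex, $P$ is a cone over a simplex; near an ideal vertex, it is a cone over a parallelepiped; in both cases all facet normals through a point are pairwise orthogonal. Restricting to $F^3$ then preserves the angles. Nonaka's lemma must be applied in its stated form, covering right-angled $3$-polyhedra of finite volume with $v_\infty\le 1$; this includes the compact case, where the classical bound is $a_2\ge 12$ (Pogorelov--Löbell), so the lemma applies even to faces that contain no ideal vertex.
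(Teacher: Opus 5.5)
Your proof is correct and takes the same route as the paper. Assuming $v_\infty(P)\le 1$, Nonaka's lemma forces every $3$-face to have at least $12$ two-dimensional faces, so $a_3^2(P)\ge 12$, which contradicts the Nikulin--Khovanskii bound $a_3^2(P^5)<12$. Your extra checks (that $3$-faces inherit right angles and finite volume, and the explicit evaluation $3\cdot 4/1=12$ of the bound) just make explicit details the paper takes for granted.
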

	\begin{proof}
		Assume the contrary: let $v_\infty(P^5) \le 1$. In this case, each 3-dimensional face $F^3$ of the polyhedron $P^5$ is a right-angled hyperbolic 3-polyhedron of finite volume, having no more than one ideal vertex. According to Nonaka's lemma~\ref{lem:nonaka_bound}, each such 3-face must contain at least 12 two-dimensional faces. Consequently, the average number of two-dimensional faces in the 3-faces of $P^5$ is bounded from below as $a_3^2(P^5) \ge 12$. On the other hand, according to Nikulin--Khovanskii inequality (Theorem~\ref{thm:nikulin}), for any 5-dimensional polyhedron, simple at edges, the reverse relation must hold: $a_3^2(P^5) < 12$. The obtained contradiction proves that $v_\infty(P^5) \ge 2$.
	\end{proof}
	
	For the inductive extension of bounds to higher dimensions, we will need the following constants. Denote by $\nu_n$ the minimum possible value of the sum of the number of facets and the number of ideal vertices for an $n$-dimensional polyhedron: $$\nu_n = \min_{P \in \mathcal{P}^n} (a_{n-1}(P) + v_\infty(P)).$$ In Dufour's work, the following bounds are obtained:
	
	\begin{lemma}[Dufour, \cite{Dufour2010}] \label{lem:dufour_recurrence}
		For any $n \ge 3$ and $P^n \in \mathcal{P}^n$, the inequalities hold:
		\begin{equation}
			a_{n-1}(P^n) \ge 1 + \nu_{n-1}, \quad \text{and} \quad \nu_n \ge 5 - 2n + 2\nu_{n-1}.
		\end{equation}
		Exact values and bounds of the initial constants are known: $\nu_3 = 9$, $\nu_4 = 15$, $\nu_5 \ge 26$, $\nu_6 \ge 45$, $\nu_7 \ge 81$.
	\end{lemma}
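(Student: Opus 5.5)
The plan is to compare $P$ with its facets. Every facet $F$ of $P\in\mathcal{P}^n$ is itself a right-angled polyhedron of finite volume in the hyperplane $H_F\cong\mathbb{H}^{n-1}$. Its facets are the sets $F\cap G$ for the facets $G$ of $P$ adjacent to $F$, and distinct $G$ give distinct $(n-2)$-faces by simplicity at edges. If $F$ contains an ideal vertex of $P$, then $F\in\mathcal{P}^{n-1}$.

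The key tool is the following. Let $v$ be an ideal vertex. The link of $v$ is an $(n-1)$-parallelepiped, so the $2(n-1)$ facets at $v$ split into $n-1$ pairs of ``opposite'' facets. For $F\ni v$ let $F'$ be its opposite at $v$. In the upper half-space model with $v=\infty$, the hyperplanes $H_F$ and $H_{F'}$ are parallel vertical hyperplanes. They are disjoint in $\mathbb{H}^n$, so $F'$ is not adjacent to $F$ and their closures share only the ideal point $v$.

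\emph{Step 1: $a_{n-1}(P)\ge 1+\nu_{n-1}$.} Choose a facet $F$ through some ideal vertex, so $F\in\mathcal{P}^{n-1}$. The facets of $P$ fall into three disjoint groups:
\begin{itemize}
\item $F$ itself;
\item the $a_{n-2}(F)$ facets adjacent to $F$;
\item for each ideal vertex $w$ of $F$, the facet $F'_w$ opposite to $F$ at $w$.
\end{itemize}
The facets $F'_w$ are non-adjacent to $F$. They are pairwise distinct, because $\overline{H_F}$ and $\overline{H_{F'}}$ share at most one point of $\partial\mathbb{H}^n$. Hence
\[
a_{n-1}(P)\ge 1+a_{n-2}(F)+v_\infty(F)\ge 1+\nu_{n-1}.
\]

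\emph{Step 2: the recurrence for $\nu_n$.} Fix an ideal vertex $v$ of $P$ and an opposite pair $F,F'$ at $v$; both lie in $\mathcal{P}^{n-1}$.

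For the ideal vertices, those of $F$ and of $F'$ overlap only in $v$, so $v_\infty(P)\ge v_\infty(F)+v_\infty(F')-1$.

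For the facets, consider $F$, $F'$, the neighbours of $F$ and the neighbours of $F'$. By inclusion--exclusion,
\[
a_{n-1}(P)\ge 2+a_{n-2}(F)+a_{n-2}(F')-c,
\]
where $c$ is the number of facets adjacent to both $F$ and $F'$. The main point, and the step I expect to need the most care, is the bound $c\le 2n-4$. Suppose $G$ is orthogonal to both $H_F$ and $H_{F'}$. If $H_G$ were a hemisphere, its centre would have to lie on both vertical hyperplanes, which are disjoint; so $H_G$ is vertical and $G$ passes through $v$. The facets through $v$ other than $F,F'$ number $2(n-1)-2=2n-4$.

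Adding the two estimates gives
\[
a_{n-1}(P)+v_\infty(P)\ge 1+2\nu_{n-1}-(2n-4)=5-2n+2\nu_{n-1}.
\]

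\emph{Step 3: initial constants.} For $\nu_3=9$ and $\nu_4=15$ I would exhibit the realizing polyhedra and use Step 2 for the lower bounds. From $\nu_3=9$, Step 2 gives only $\nu_4\ge 13$. So the sharp value $\nu_4=15$, and likewise $\nu_5\ge 26$ rather than the $25$ that Step 2 gives from $\nu_4=15$, require additional case analysis as in \cite{Dufour2010}. I would cite that work for these refinements. Iterating Step 2 from $\nu_5\ge26$ then gives $\nu_6\ge 45$ and $\nu_7\ge 81$.
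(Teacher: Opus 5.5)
The paper gives no proof of this lemma; it simply quotes it from \cite{Dufour2010}, so your argument has to stand on its own. For the two recurrences it does. The three groups of facets in Step 1 are disjoint, the ideal vertices of $F$ and $F'$ share only $v$, and the hemisphere-centre argument correctly forces every facet adjacent to both $F$ and $F'$ to have a vertical hyperplane.

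Two small completions are needed. First, verticality only gives $v\in\overline{H_G}$. Add that for $p\in G$ the vertical ray from $p$ lies in $P$ (since $v\in\overline{P}$ and $P$ is convex) and in $H_G$, hence in $G$. So $G$ really is one of the $2(n-1)$ facets at $v$. Second, the case $n=3$ of the recurrence uses $\nu_2=5$, which you should state; it is attained e.g.\ by a triangle with angles $\pi/2,0,0$.

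Step 3 contains an arithmetic slip. With $n=4$ and $\nu_3=9$ the recurrence gives $5-8+18=15$, not $13$. So $\nu_4\ge 15$ follows directly from Step 2, just as $\nu_3\ge 9$ follows from $\nu_2=5$. Only an example attaining $15$ is needed for equality, not extra case analysis.

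The genuinely external inputs are therefore the realizing examples for $\nu_3$ and $\nu_4$, and the improvement $\nu_5\ge 26$ over the $25$ the recurrence yields. Citing Dufour for these is exactly what the paper does. From $\nu_5\ge 26$, your iteration to $\nu_6\ge 45$ and $\nu_7\ge 81$ is correct.
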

	
	Finally, the recurrent transition to higher dimensions for the number of ideal vertices is carried out on the basis of the following lemma. Below, for completeness of presentation, its proof is also reproduced.
	
	\begin{lemma}[Alexandrov, \cite{Ale23}] \label{lem:alex_recurrence}
		Let $P^n \in \mathcal{P}^n$. Then the minimal number of its ideal vertices is bounded from below by the relation:
		$$
		v_\infty(P^n) \ge \left\lceil \frac{a_{n-1}(P^n) \cdot v'_\infty(P^n)}{2(n-1)} \right\rceil,
		$$
		where $v'_\infty(P^n)$ is the minimal number of ideal vertices that a facet of the polyhedron $P^n$ can contain.
	\end{lemma}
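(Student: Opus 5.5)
The plan is a double count of incidences between facets and ideal vertices of $P^n$. Consider the set
$$I = \{(F, v) : F \text{ a facet of } P^n,\ v \text{ an ideal vertex of } P^n,\ v \in F\}.$$
We compute $|I|$ in two ways: first grouping by vertices, then grouping by facets. This gives the required inequality directly.

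First we count $|I|$ by vertices. As recalled in the Preliminaries, near an ideal vertex $v$ the polyhedron $P^n$ is a cone over an $(n-1)$-dimensional parallelepiped. That parallelepiped has exactly $2(n-1)$ facets, so exactly $2(n-1)$ facets of $P^n$ contain $v$. Hence
$$|I| = 2(n-1)\, v_\infty(P^n).$$

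Next we count $|I|$ by facets. Each facet $F$ of $P^n$ is itself an $(n-1)$-dimensional right-angled hyperbolic polyhedron of finite volume, since the dihedral angles of a face of a right-angled polyhedron are again right. The ideal vertices of $F$ are exactly the ideal vertices of $P^n$ lying in $F$. By the definition of $v'_\infty(P^n)$, every facet contains at least $v'_\infty(P^n)$ ideal vertices. Summing over all $a_{n-1}(P^n)$ facets gives
$$|I| \ge a_{n-1}(P^n)\, v'_\infty(P^n).$$

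Comparing the two expressions gives
$$2(n-1)\, v_\infty(P^n) \ge a_{n-1}(P^n)\, v'_\infty(P^n).$$
Dividing by $2(n-1)$ and using that $v_\infty(P^n)$ is an integer, we may take the ceiling, which is the claimed bound.

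The only point needing care is the vertex count. Every ideal vertex lies in exactly $2(n-1)$ facets, not merely at least that many, and this comes from the local cone-over-parallelepiped structure. Strictly, the direction of the inequality only requires "at most $2(n-1)$", which this structure supplies. Everything else is routine. In applications, $v'_\infty(P^n)$ will be bounded below by the minimum of $v_\infty$ over $\mathcal{P}^{n-1}$, provided facets without ideal vertices are excluded or handled separately.
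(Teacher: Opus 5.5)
Your proof is correct and matches the paper's argument: both double count facet--ideal-vertex incidences, using that each ideal vertex lies in exactly $2(n-1)$ facets, to get $2(n-1)\,v_\infty(P^n)=\sum_F v_\infty(F)\ge a_{n-1}(P^n)\,v'_\infty(P^n)$, and then take the ceiling by integrality. Your added remarks are also accurate (only ``at most $2(n-1)$'' is needed, and bounding $v'_\infty$ by $\min_{\mathcal{P}^{n-1}} v_\infty$ requires every facet to have an ideal vertex), though neither is needed for the lemma as stated.
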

	\begin{proof}
		Since each ideal vertex of the polyhedron $P^n$ belongs to exactly $2(n-1)$ facets, the following inequality holds
		$$
		v_\infty(P^n) \cdot 2(n-1) = \sum_{\dim F = n-1} v_\infty(F) \ge a_{n-1}(P^n) \cdot v'_\infty(P^n),
		$$
		where the summation is taken over all facets $F$ of the polyhedron $P^n$. Expressing $v_\infty(P^n)$ and taking into account that the number of vertices must be an integer, we arrive at the required inequality.
	\end{proof}
	
	\section{Bounds for ideal vertices}
	
	\subsection{Bound for 5-dimensional polyhedra}
	
	In this section we will prove that the case $v_\infty(P^5) = 2$ is geometrically impossible.
	
	\begin{theorem} \label{thm:dim5}
		Let $P \subset \mathbb{H}^5$ be a right-angled hyperbolic polyhedron of finite volume. Then the number of its ideal vertices is strictly greater than two, that is, $v_\infty(P) \ge 3$.
	\end{theorem}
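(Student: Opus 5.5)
By Lemma~\ref{lem:alex_base} it suffices to rule out $v_\infty(P)=2$. Assume $v_\infty(P)=2$ and let the ideal vertices be $p$ and $q$. The plan is to repeat the Nikulin--Khovanskii counting of Lemma~\ref{lem:alex_base}, but in a sharper form, using the geometric method of orthogonal gluings to control the 3-faces that pass through the ideal vertices.

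\textbf{Step 1: all 3-faces except those through $p$ or $q$ are large.} A 3-face $F^3$ containing neither $p$ nor $q$ is a compact right-angled 3-polyhedron. A 3-face containing exactly one of them has $v_\infty(F^3)\le 1$. In both cases Lemma~\ref{lem:nonaka_bound} gives $a_2(F^3)\ge 12$.

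\textbf{Step 2: the remaining 3-faces are few, and large on average.} A 3-face contains an ideal vertex as a cone over a 2-face of the vertex link, which is a 4-cube. Hence each ideal vertex lies in exactly $\binom{4}{2}\cdot 2^2=24$ three-faces. Therefore at most $48$ three-faces fail the bound of Step~1, and only those containing both $p$ and $q$ can have exactly $2$ ideal vertices. The key task is to prove that even these faces are large on average. For this I would use orthogonal gluings: a right-angled 3-face $F$ with an ideal vertex can be doubled, or reflected, across a facet orthogonal to it. If two copies are glued along a face containing an ideal vertex, we obtain a right-angled polyhedron with at most one ideal vertex in some other face, or an enlarged polyhedron to which Nonaka's bound applies. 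The face count then lets us conclude that the original $F$ had many faces, roughly $a_2(F)\ge 8$ or so.

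\textbf{Step 3: combine the counts.} With the refined bounds, compute $a_3^2(P)=\sum_{F^3}a_2(F^3)/a_3(P)$. We need a lower bound on $a_3(P)$ so that the at most $48$ deficient faces are swamped by faces with $a_2\ge 12$. For this I would use Dufour's bound $a_4(P)\ge 1+\nu_4=16$ from Lemma~\ref{lem:dufour_recurrence}, together with the fact that each facet is a 4-polyhedron with many 3-faces, which yields a large $a_3(P)$. A sharper version of Theorem~\ref{thm:nikulin}, keeping track of the deficit, should then give the contradiction $a_3^2(P)\ge 12$.

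\textbf{Main obstacle.} The hardest step is the gluing argument in Step~2, which must handle 3-faces with two ideal vertices, where Nonaka's lemma does not directly apply. If the averaging in Step~3 is too weak, I would instead apply the gluing to a facet $F^4$ through $p$ and $q$. Doubling $P$ orthogonally along a suitable facet that avoids both ideal vertices would produce a right-angled polyhedron whose ideal-vertex count contradicts the structure forced by $v_\infty=2$ and Lemma~\ref{lem:alex_base}. Concretely, I would look for a face configuration that is an orthogonal product with one ideal vertex, which can be forced down to at most one ideal vertex.
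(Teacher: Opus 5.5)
There is a genuine gap, and it sits in Step~3, not in the details. Theorem~\ref{thm:nikulin} only gives the strict qualitative bound $a_3^2(P)<12$, and Lemma~\ref{lem:nonaka_bound} gives exactly $a_2(F^3)\ge 12$ on the good faces, so there is no margin. Once even one 3-face has $a_2(F^3)<12$, the best lower bound your counting can give for the average has the form $12-c/a_3(P)$ with $c>0$. That is compatible with $a_3^2(P)<12$ however large $a_3(P)$ is, so ``swamping'' the deficient faces by a large $a_3(P)$ can never produce a contradiction.

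Two consequences follow. First, the bound ``$a_2(F)\ge 8$ or so'' in Step~2, which your gluing sketch does not actually establish, would not help even if proved. Second, the ``sharper version of Theorem~\ref{thm:nikulin} keeping track of the deficit'' is the entire missing argument, not a routine refinement. The fallback also fails: doubling $P$ along a facet that avoids $p$ and $q$ produces four ideal vertices, which contradicts nothing.

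The missing idea, which is what the paper does, is to apply the orthogonal gluing to $P$ itself along a facet that \emph{contains} both $p$ and $q$, for instance one containing their minimal common face. This works as follows.
\begin{itemize}
\item The reflection fixes $p$ and $q$ and creates no new cusps, so the double is again right-angled, of finite volume, with exactly two ideal vertices.
\item The glued facet becomes interior. The other common facets contain finite points of it, so they are orthogonal to it and merge with their mirror images. Hence the set of facets containing both $p$ and $q$ loses exactly one element.
\item Since $p$ and $q$ share at most four facets, at most three doublings leave them sharing at most one facet. Then no 3-face, being an intersection of two facets, contains both.
\item Every 3-face of the final double has at most one ideal vertex, and the argument of Lemma~\ref{lem:alex_base} applies verbatim.
\end{itemize}

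Alternatively, your deficit idea can be made rigorous. Truncate the two cusps, whose vertex figures are 4-cubes; this gives a simple 5-polytope $Q$ with $a_4(Q)=a_4(P)+2$, $a_3(Q)=a_3(P)+16$ and $\sum_{F^3\subset Q}a_2=\sum_{F^3\subset P}a_2+144$. For simple 5-polytopes the Dehn--Sommerville relations give $\sum_{F^3}a_2(F^3)=12a_3-30a_4+60$. Together these yield the exact identity $\sum_{F^3\subset P}(12-a_2(F^3))=30\,a_4(P)-48$. Only the at most $24$ three-faces through both $p$ and $q$ can contribute positively, each at most $12-4=8$. So $30\,a_4(P)-48\le 192$, contradicting $a_4(P)\ge 16$ from Lemma~\ref{lem:dufour_recurrence}. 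Note that the deficit is controlled by the number of facets, not by the number of 3-faces.
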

	
	\begin{proof}
		According to the base bound from Lemma \ref{lem:alex_base}, we have the inequality $v_\infty(P) \ge 2$. It remains to prove that the case $v_\infty(P) = 2$ is impossible. Assume the contrary: let the polyhedron $P$ have exactly two ideal vertices, which we will denote by $u$ and $v$. The intersection of any non-empty set of faces of a convex polyhedron, if it is not empty, is itself a face. Since the vertices $u$ and $v$ belong to $P$, the set of faces containing both of them is not empty (at a minimum, the polyhedron $P$ itself is such a face). Consequently, there exists a unique inclusion-minimal common face $F_{min} \subset P$, simultaneously containing $u$ and $v$. Denote its dimension by $d = \dim F_{min}$. Possible values of $d$ are 1, 2, 3, or a minimal common face of dimension $d \le 3$ does not exist.
		
		Let us describe the reduction algorithm. We show that if $d \le 3$, there exists an operation allowing one to construct a new right-angled polyhedron $P'$, for which $v_\infty(P') = 2$, but the dimension of the minimal common face $d'$ is strictly greater than $d$.
		
		\textbf{Operation of orthogonal gluing.} Let $H_1$ be a facet of the polyhedron $P$ containing $F_{min}$ (and therefore, containing $u$ and $v$). Reflect $P$ with respect to the hyperplane containing $H_1$, and obtain the polyhedron $\rho_{H_1}(P)$. Consider the union $P' = P \cup_{H_1} \rho_{H_1}(P)$.
		
		The new polyhedron $P'$ is a right-angled polyhedron of finite volume. Indeed, any facet $H_i \subset P$ ($i \neq 1$) having a non-empty intersection with $H_1$ is orthogonal to it due to the right-angledness of $P$. Consequently, the hyperplane containing $H_i$ is invariant under the reflection $\rho_{H_1}$, and the union $H'_i = H_i \cup \rho_{H_1}(H_i)$ forms a single, smooth (totally geodesic) facet in the polyhedron $P'$. The dihedral angles between the intersecting facets $H'_i$ and $H'_j$ remain right.
		
		The facet $H_1$ itself in the polyhedron $P'$ ceases to be an element of the boundary: each of its points acquires a full neighborhood, composed of semi-neighborhoods in $P$ and $\rho_{H_1}(P)$, and becomes an interior point of $P'$. Thus, $H_1$ is excluded from the composition of facets of the new polyhedron.
		
		Since the ideal vertices $u, v \in H_1$, they are invariant under reflection and are identified with their copies. Since $P$ does not contain other ideal vertices, new cusps do not arise, and $v_\infty(P') = 2$.
		
		\textbf{Reduction step 1 ($d=1 \to d=2$).} 
		Let the minimal common face for $u$ and $v$ be a 1-dimensional edge $e$. In a 5-dimensional space, an edge of an acute-angled polyhedron is the intersection of exactly $5-1 = 4$ facets: $e = H_1 \cap H_2 \cap H_3 \cap H_4$. This means that the vertices $u$ and $v$ jointly belong to exactly these four facets. We perform the gluing operation $P' = P \cup_{H_1} \rho_{H_1}(P)$ along the facet $H_1$. In the new right-angled polyhedron $P'$, the set $H_1$ has gone to the interior. Consequently, the list of facets of the polyhedron $P'$, simultaneously containing $u$ and $v$, is reduced to three: $H'_2, H'_3, H'_4$. The intersection of these three facets determines a new minimal face of dimension $5-3 = 2$. Thus, in $P'$ the vertices $u$ and $v$ are no longer connected by an edge, and the dimension of their minimal common face increases to $d'=2$.
		
		\textbf{Reduction step 2 ($d=2 \to d=3$).} 
		Let $F_{min} = F^2$ ($d=2$). A two-dimensional face is defined by the intersection of exactly $5-2 = 3$ facets: $F^2 = H_1 \cap H_2 \cap H_3$. The vertices $u$ and $v$ jointly belong only to them. We perform a gluing of $P'$ along the facet $H_1$. The set $H_1$ becomes interior. In the polyhedron $P'$, the ideal vertices $u$ and $v$ jointly belong to exactly two facets: $H'_2$ and $H'_3$. The intersection $H'_2 \cap H'_3$ defines a face of dimension $5-2 = 3$. Consequently, the dimension of the minimal common face in $P'$ strictly increases to $d'=3$.
		
		\textbf{Reduction step 3 ($d=3 \to d \geq 4$).} 
		Let $F_{min} = F^3$ ($d=3$). This face is the intersection of exactly $5-3=2$ facets: $F^3 = H_1 \cap H_2$. The ideal vertices $u$ and $v$ jointly belong exclusively to these two facets. We perform a gluing of $P'$ along the facet $H_1$. In the resulting polyhedron $P'$, the set $H_1$ becomes interior, and the vertices $u, v$ jointly belong to exactly one facet $H'_2$. Since they no longer belong jointly to any two facets, they do not lie in any common face of dimension 3 or less on the boundary of $P'$.
		
		\textbf{Completion of the proof.} 
		Applying the described reduction algorithm (having performed from zero to three gluing steps), we in any case arrive at a right-angled hyperbolic polyhedron of finite volume $\tilde{P}$, for which $v_\infty(\tilde{P}) = 2$, but the ideal vertices do not lie simultaneously in any common face of dimension 3 or less. Consequently, any 3-face $F^3 \subset \tilde{P}$ satisfies the condition $v_\infty(F^3) \le 1$. According to Lemma \ref{lem:nonaka_bound}, each right-angled hyperbolic 3-face with such a number of ideal vertices must have a number of two-dimensional faces $a_2(F^3) \ge 12$. Hence, the average number of 2-faces in the 3-faces of the polyhedron $\tilde{P}$ is strictly bounded from below:
		\begin{equation} \label{eq:a2_lower}
			\bar{a}_2 \ge 12.
		\end{equation}
		On the other hand, the application of the Nikulin--Khovanskii inequality (Theorem \ref{thm:nikulin}) to the polyhedron $\tilde{P}$ gives a strict upper bound on this same quantity:
		\begin{equation} \label{eq:a2_upper}
			\bar{a}_2 < 12.
		\end{equation}
		A comparison of inequalities \eqref{eq:a2_lower} and \eqref{eq:a2_upper} leads to a contradiction. Consequently, the initial assumption that $v_\infty(P) = 2$ is false, and $v_\infty(P) \ge 3$. The theorem is proved.
	\end{proof}
	
	\begin{remark}
		Note that potentially the method of sequential orthogonal gluings can be modified to exclude 5-dimensional polyhedra with three ideal vertices, which would give the bound $v_\infty(P) \ge 4$. However, for $v_\infty(P) \ge 5$, this approach apparently encounters an insurmountable obstacle -- a polyhedron with four ideal vertices can contain two 3-faces that do not lie in a common 4-face and each of which contains two ideal vertices.
	\end{remark}
	
	\subsection{Bounds for ideal vertices for higher dimensions}
	
	The new strict base $v_5 \ge 3$ established in Theorem \ref{thm:dim5} allows launching a recurrent process for calculating the minimal number of ideal vertices in spaces $\mathbb{H}^n$ for $n \ge 6$. We denote by $v_n = \min_{P \in \mathcal{P}^n} v_\infty(P)$ the exact lower bound of the number of ideal vertices for the $n$-dimensional space.
	
	\begin{theorem} \label{thm:higher_dims}
		The minimal number of ideal vertices $v_n$ of a right-angled hyperbolic polyhedron of finite volume in dimensions $6 \le n \le 12$ is strictly greater than previously known bounds. Exact values of the new lower bounds are given in Table~\ref{tab:comparison}.
	\end{theorem}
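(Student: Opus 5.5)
The plan is to run the recurrence of Lemma~\ref{lem:alex_recurrence} upward from the new base $v_5 \ge 3$ given by Theorem~\ref{thm:dim5}, feeding it facet counts from Lemma~\ref{lem:dufour_recurrence}.

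\textbf{Step 1: every facet is a smaller member of the class.} For $n \ge 6$, I will show that every facet $F$ of $P^n \in \mathcal{P}^n$ lies in $\mathcal{P}^{n-1}$. The facet $F$ is a right-angled hyperbolic polyhedron of finite volume in $\mathbb{H}^{n-1}$, because its dihedral angles are induced from the orthogonal facets meeting it. Since $n-1 \ge 5$, the Potyagailo--Vinberg theorem rules out compact right-angled polyhedra in that dimension. Hence $F$ has at least one ideal vertex, so $F\in\mathcal{P}^{n-1}$, and therefore $v'_\infty(P^n) \ge v_{n-1}$.

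\textbf{Step 2: the recurrence.} Combining Step 1 with Lemma~\ref{lem:alex_recurrence} and the first inequality of Lemma~\ref{lem:dufour_recurrence}, $a_{n-1}(P^n)\ge 1+\nu_{n-1}$, gives
$$
v_n \ge \left\lceil \frac{(1+\nu_{n-1})\, v_{n-1}}{2(n-1)} \right\rceil .
$$

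\textbf{Step 3: the $\nu_n$ inputs.} For $n=6,7,8$ I use the known values $\nu_5\ge 26$, $\nu_6\ge 45$, $\nu_7\ge 81$. For $n\ge 9$ the values $\nu_8,\dots,\nu_{11}$ must be generated. I will take the larger of two bounds:
\begin{itemize}
\item Dufour's bound $\nu_n \ge 5-2n+2\nu_{n-1}$;
\item the bound $\nu_n \ge 1+\nu_{n-1}+v_n$, which follows directly from the definition of $\nu_n$ and the first inequality of Lemma~\ref{lem:dufour_recurrence}.
\end{itemize}
The second bound is where the new ideal-vertex counts feed back in, and it should dominate once $v_n$ grows. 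The ideal and facet bounds are then computed jointly, dimension by dimension.

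\textbf{Step 4: the computation and comparison.} The first steps read:
\begin{itemize}
\item $n=6$: $v_6 \ge \lceil 27\cdot 3/10\rceil = 9$, versus $\lceil 27\cdot 2/10\rceil = 6$ from the old base;
\item $n=7$: $v_7 \ge \lceil 46\cdot 9/12\rceil = 35$;
\item $n=8$: $v_8 \ge \lceil 82\cdot 35/14\rceil = 205$.
\end{itemize}
I continue the same way up to $n=12$ and tabulate the results. To prove strict improvement, I run the identical recurrence from the old base $v_5\ge 2$. The recurrence is monotone in $v_{n-1}$ and in $\nu_{n-1}$, so the new bounds dominate the old ones at every step. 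They are strictly larger already at $n=6$, and strictness then propagates because the multipliers $(1+\nu_{n-1})/(2(n-1))$ are at least $1$, so the ceilings cannot close the gap.

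\textbf{Main obstacle.} I expect the delicate point to be the $\nu$-recurrence for $n \ge 9$. Whether the improved bound $\nu_n \ge 1+\nu_{n-1}+v_n$ is legitimately available, or whether the table should rely only on Dufour's bound, affects the exact entries. I would first verify that the definition of $\nu_n$ as a minimum over $\mathcal{P}^n$ makes the sum bound valid pointwise for each $P$ before taking the minimum. Beyond that, the remaining difficulty is careful bookkeeping of the ceilings.
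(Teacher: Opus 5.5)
Your proposal is correct and is essentially the paper's own proof. It uses the same recurrence from Lemma~\ref{lem:alex_recurrence}, fed by $a_{n-1}(P^n)\ge 1+\nu_{n-1}$ and the update $\nu_n\ge\max(5-2n+2\nu_{n-1},\,1+\nu_{n-1}+v_n)$, which is indeed valid pointwise before minimizing; your Step~1 makes explicit the Potyagailo--Vinberg fact that facets lie in $\mathcal{P}^{n-1}$, which the paper uses tacitly. When you finish the bookkeeping, the entries for $n\le 11$ come out exactly as in Table~\ref{tab:comparison}, but the last step gives $\lceil 1\,117\,608\,345\,547\,966\,355\,781/22\rceil = 50\,800\,379\,343\,089\,379\,809$, which is smaller than the printed $n=12$ entry $50\,800\,381\,957\,715\,834\,354$ (an arithmetic slip in the paper's final division), so the method supports only the former value there.
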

	
	\begin{proof}
		We apply the recurrence formula $v_n \ge \left\lceil \frac{a_{n-1}(P^n) \cdot v_{n-1}}{2(n-1)} \right\rceil$ (Lemma \ref{lem:alex_recurrence}). To estimate the minimal number of facets $a_{n-1}(P^n)$, we use the inequality $a_{n-1}(P^n) \ge 1 + \nu_{n-1}$ (Lemma \ref{lem:dufour_recurrence}). The constant $\nu_n$, defined as the minimum possible sum of the number of facets and ideal vertices, is updated at each step according to the rule $\nu_n \ge \max(5 - 2n + 2\nu_{n-1}, \, a_{n-1}(P^n) + v_n)$.
		
		\textbf{Dimension $n=6$:} It is known that $\nu_5 \ge 26$, consequently, the minimal number of facets of a 6-dimensional polyhedron $a_5(P^6) \ge 1 + 26 = 27$. Substituting the new base $v_5 \ge 3$, we obtain:
		$$v_6 \ge \left\lceil \frac{27 \cdot 3}{2 \cdot 5} \right\rceil = \lceil 8.1 \rceil = 9.$$
		Let us recalculate the constant $\nu_6$. According to the linear bound of Lemma \ref{lem:dufour_recurrence}: $\nu_6 \ge 5 - 12 + 2(26) = 45$. On the other hand, by the sum $a_5(P^6) + v_6 \ge 27 + 9 = 36$. We take the maximum: $\nu_6 \ge 45$.
		
		\textbf{Dimension $n=7$:} Using $\nu_6 \ge 45$, we find $a_6(P^7) \ge 1 + 45 = 46$. We calculate the number of ideal vertices:
		$$v_7 \ge \left\lceil \frac{46 \cdot 9}{12} \right\rceil = \lceil 34.5 \rceil = 35.$$
		We update $\nu_7$: the linear bound gives $5 - 14 + 2(45) = 81$. The bound by sum gives $46 + 35 = 81$. Total $\nu_7 \ge 81$.
		
		\textbf{Dimension $n=8$:} From $\nu_7 \ge 81$ it follows that $a_7(P^8) \ge 1 + 81 = 82$. Then:
		$$v_8 \ge \left\lceil \frac{82 \cdot 35}{14} \right\rceil = \left\lceil \frac{2870}{14} \right\rceil = 205.$$
		Recalculation of $\nu_8$: the linear bound gives $5 - 16 + 2(81) = 151$. The direct sum gives $82 + 205 = 287$. The maximum is equal to $\nu_8 \ge 287$.
		
		Starting from this step, the direct sum of the number of faces and vertices begins to dominate.
		
		\textbf{Dimension $n=9$:} We use $\nu_8 \ge 287$, whence $a_8(P^9) \ge 1 + 287 = 288$.
		$$v_9 \ge \left\lceil \frac{288 \cdot 205}{16} \right\rceil = \left\lceil \frac{59040}{16} \right\rceil = 3690.$$
		The constant $\nu_9 \ge 288 + 3690 = 3978$.
		
		\textbf{Dimension $n=10$:} We have $a_9(P^{10}) \ge 1 + 3978 = 3979$. We calculate the vertices:
		$$v_{10} \ge \left\lceil \frac{3979 \cdot 3690}{18} \right\rceil = \left\lceil \frac{14\,682\,510}{18} \right\rceil = 815\,695.$$
		The constant $\nu_{10} \ge 3979 + 815\,695 = 819\,674$.
		
		\textbf{Dimension $n=11$:} We obtain $a_{10}(P^{11}) \ge 1 + 819\,674 = 819\,675$.
		$$v_{11} \ge \left\lceil \frac{819\,675 \cdot 815\,695}{20} \right\rceil = \left\lceil \frac{668\,604\,799\,125}{20} \right\rceil = 33\,430\,239\,957.$$
		The constant $\nu_{11} \ge 819\,675 + 33\,430\,239\,957 = 33\,431\,059\,632$.
		
		\textbf{Dimension $n=12$:} For the limit dimension $n=12$, the number of facets $a_{11}(P^{12}) \ge 1 + 33\,431\,059\,632 = 33\,431\,059\,633$.
		$$v_{12} \ge \left\lceil \frac{33\,431\,059\,633 \cdot 33\,430\,239\,957}{22} \right\rceil = \left\lceil \frac{1\,117\,608\,345\,547\,966\,355\,781}{22} \right\rceil = 50\,800\,381\,957\,715\,834\,354.$$
		
		The performed calculations clearly demonstrate the combinatorial explosion of the required number of ideal vertices for large dimensions.
	\end{proof}
	
	\begin{table}[h]
		\centering
		\renewcommand{\arraystretch}{1.4}
		\begin{tabular}{|c|c|c|c|}
			\hline
			Dimension & Nonaka \cite{Nonaka} & Estimate from \cite{Ale23} & \textbf{New estimate} \\
			$n$ & $v_\infty(P^n) \ge$ & $v_\infty(P^n) \ge$ & $\mathbf{v_\infty(P^n) \ge}$ \\
			\hline
			5 & 1 & 2 & \textbf{3} \\
			6 & 3 & 6 & \textbf{9} \\
			7 & 17 & 23 & \textbf{35} \\
			8 & 36 & 135 & \textbf{205} \\
			9 & 91 & 1 704 & \textbf{3 690} \\
			10 & 254 & 182 044 & \textbf{815 695} \\
			11 & 741 & $1.67 \cdot 10^9$ & \textbf{33 430 239 957} \\
			12 & 2200 & $1.27 \cdot 10^{17}$ & \textbf{50 800 381 957 715 834 354} \\
			\hline
		\end{tabular}
		\caption{Improved exact lower bounds on the number of ideal vertices $v_\infty(P^n)$.}
		\label{tab:comparison}
	\end{table}
	
	\section{Bounds for finite vertices}
	
	In this section, we apply the method of orthogonal gluings to obtain lower bounds for the number of finite vertices. Unlike the previous section, where the method was used to exclude configurations with a small number of ideal vertices, here we sequentially exclude the possibility of the existence of a polyhedron with a small number of finite vertices, relying on Kolpakov's theorem and parity properties.
	
	\subsection{Bound for 7-dimensional polyhedra}
	
	\begin{theorem} \label{thm:dim7_fin}
		Let $P \subset \mathbb{H}^7$ be a right-angled hyperbolic polyhedron of finite volume. Then the number of its finite vertices $v_{fin}(P) \ge 4$.
	\end{theorem}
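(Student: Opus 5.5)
The plan is to combine a parity argument for finite vertices with Kolpakov's theorem. Parity rules out odd values, Kolpakov rules out $0$, and a local argument at a finite vertex excludes the remaining value $2$. Orthogonal gluings should not even be needed; I would keep them in reserve.

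\textbf{Step 1 (parity).} Let $Q$ be any right-angled polyhedron of finite volume in $\mathbb{H}^m$. Every edge of $Q$ is a segment whose two endpoints are vertices, finite or ideal. This is because $Q$ has finite volume, so an edge cannot run off to the absolute except at an ideal vertex. By the local structure recalled in the Preliminaries, a finite vertex is a cone over an $(m-1)$-simplex and so lies on exactly $m$ edges. An ideal vertex is a cone over an $(m-1)$-parallelepiped and so lies on exactly $2(m-1)$ edges. Counting pairs (vertex, edge through it) in two ways gives
$$ m\, v_{fin}(Q) + 2(m-1)\, v_\infty(Q) = 2\,a_1(Q). $$
Hence for odd $m$ the number $v_{fin}(Q)$ is even. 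I will apply this with $m=7$ to $P$ itself, and with $m=3$ to every $3$-face of $P$. Each $3$-face is again a right-angled polyhedron of finite volume, with its finite and ideal vertices being exactly the vertices of $P$ lying in it.

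\textbf{Step 2 (small cases).} By Step 1, $v_{fin}(P)$ is even. If $v_{fin}(P)=0$, then all vertices of $P$ are ideal, so $P$ is an ideal right-angled polyhedron in $\mathbb{H}^7$. This contradicts Kolpakov's theorem, which forbids such polyhedra for $n\ge 7$. It therefore remains only to exclude $v_{fin}(P)=2$.

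\textbf{Step 3 (excluding two finite vertices).} Suppose the finite vertices of $P$ are exactly $w_1$ and $w_2$. At $w_1$ exactly seven facets $H_1,\dots,H_7$ meet, and the facet configuration there is that of a simplicial cone. Hence any $4$ of these facets intersect in a $3$-face through $w_1$. Take $A=\{H_1,\dots,H_4\}$ and $B=\{H_4,\dots,H_7\}$, and let $F_A$, $F_B$ be the corresponding $3$-faces. Each of them contains the finite vertex $w_1$, so by Step 1 with $m=3$ each contains an even, hence at least $2$, number of finite vertices. Therefore each contains $w_2$. Then $w_2$ lies in all of $H_1,\dots,H_7$. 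By the fact quoted in the Preliminaries that a face is the intersection of exactly $n-k$ facets, the intersection of all seven is the single vertex $w_1$. So $w_2=w_1$, a contradiction, and therefore $v_{fin}(P)\ge 4$.

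\textbf{Main obstacle.} The only delicate point is justifying Step 1 rigorously: every edge of a finite-volume polyhedron has both endpoints among its vertices, and the edge degrees are exactly $m$ and $2(m-1)$. Both facts follow from the local cone structure and the simplicity at edges recalled in the Preliminaries. If the face-restriction in Step 3 turned out to need more care, my fallback would be an orthogonal gluing. Gluing along a facet through $w_1$ removes $w_1$ as a vertex, since only six facets then remain at it, and doubles the finite vertices off that facet. I would then aim to reduce to a polyhedron with zero finite vertices and again contradict Kolpakov.
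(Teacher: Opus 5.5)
Your proof is correct, and it takes a genuinely different route from the paper's. The paper applies parity only to $3$-faces, which excludes $v_{fin}=1$. It then excludes $v_{fin}=2$ by orthogonal gluing: the two finite vertices share a $3$-face, hence a facet $H_1$, and doubling $P$ across $H_1$ turns both into interior points of edges. This produces an ideal right-angled $7$-polyhedron, contradicting Kolpakov. For $v_{fin}=3$ the paper glues again across a facet through two of the vertices and falls back on the previous cases.

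You instead make three different moves:
\begin{enumerate}
\item You apply the handshake count to $P$ itself, which removes all odd values at once.
\item You use Kolpakov only once, directly on $P$, to exclude $v_{fin}=0$.
\item You exclude $v_{fin}=2$ by a purely local observation: the $3$-faces $F_A,F_B$ through $w_1$ satisfy $F_A\cap F_B=\{w_1\}$, so they cannot both contain the only other finite vertex.
\end{enumerate}

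What your route buys is that no new polyhedron is constructed. You therefore never need to check that $P\cup\rho_{H_1}(P)$ is again convex, right-angled and of finite volume. The paper asserts this rather briefly, and it actually requires knowing that hyperplanes of non-adjacent facets of an acute-angled polyhedron do not meet. The paper's route, in turn, showcases the gluing operation it also uses for Theorem~\ref{thm:dim5}. Your local argument also gives slightly more. In any dimension $n\ge 6$ one can choose two $(n-3)$-subsets of the $n$ facets at a finite vertex whose union is all $n$ facets. So every finite vertex forces at least two further finite vertices.

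One slip to fix in Step 1. An ideal vertex lies on $2^{m-1}$ edges, namely the vertices of the $(m-1)$-dimensional box in its link, not on $2(m-1)$ edges. The number $2(m-1)$ counts the \emph{facets} through the vertex, and the two numbers agree only when $m=3$. The correct identity is $m\,v_{fin}(Q)+2^{m-1}v_\infty(Q)=2a_1(Q)$. Since $2^{m-1}$ is even for $m\ge 2$, your parity conclusion, and hence the whole argument, is unaffected.
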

	
	\begin{proof}
		In any right-angled 3-dimensional polyhedron, the degree of each finite vertex is 3, and of each ideal one is 4 (see \cite{Dufour2010}). From the sum of degrees formula $2E = 3v_{fin} + 4v_\infty$, it follows that the value $3v_{fin}$ must be even, and therefore, the number of finite vertices $v_{fin}$ itself is always even. Consequently, any 3-dimensional right-angled face cannot have an odd number of finite vertices.
		
		According to Kolpakov's theorem~\cite{Ko2012}, ideal right-angled polyhedra of finite volume in dimensions $n \geq 7$ do not exist. From this it follows that the polyhedron $P^7$ has at least one finite vertex, that is, $v_{fin}(P^7) \ge 1$.
		
		Let us prove that $v_{fin}(P) \ge 4$. To do this, we sequentially exclude the cases with 2 and 3 vertices.
		
		\textbf{Case 1: $v_{fin}(P^7) \ne 1$.} 
		Assume the contrary: let $v_{fin}(P^7) = 1$, then there exists a 3-face of the polyhedron $P^7$ containing exactly 1 finite vertex, which is impossible.
		
		\textbf{Case 2: $v_{fin}(P^7) \ne 2$.} 
		Let $v_{fin}(P^7) = 2$, we denote the two only finite vertices by $u$ and $v$. Take any 3-dimensional face $F^3$ containing vertex $u$. Since $F^3$ itself is a right-angled 3-polyhedron, it must have an even number of finite vertices. Since there are only two of them in the entire 7-dimensional polyhedron, the face $F^3$ must also contain vertex $v$. Consequently, vertices $u$ and $v$ lie in one common 3-dimensional face.
		
		A three-dimensional face in a 7-dimensional polyhedron, simple at edges, is the intersection of exactly $7-3=4$ facets. This means that the vertices $u$ and $v$ jointly lie in these 4 facets. Let $H_1$ be one of them. We perform the operation of orthogonal gluing $P' = P \cup_{H_1} \rho_{H_1}(P)$ along the facet $H_1$, similar to the one used in the proof of Theorem \ref{thm:dim5}. Since the vertices $u$ and $v$ lie on $H_1$, under reflection they transition into themselves.
		
		In a right-angled polyhedron, each $k$-dimensional face (except an ideal vertex) is defined by the intersection of exactly $n-k$ facets. In the initial polyhedron $P$, the finite vertices (0-dimensional faces) were the intersection point of exactly 7 orthogonal facets. In the new polyhedron $P'$, the facet $H_1$ becomes interior. Consequently, the points $u$ and $v$ remain in the intersection of only 6 orthogonal facets forming the boundary of $P'$. In a 7-dimensional space, the intersection of 6 facets forms a 1-dimensional edge.
		
		Thus, the points $u$ and $v$ in the polyhedron $P'$ become interior points of edges and cease to be vertices. Since there were no other finite vertices in $P$, in the polyhedron $P'$ no finite vertices remain at all, that is, $v_{fin}(P') = 0$. We obtained $P'$ — an ideal right-angled polyhedron in $\mathbb{H}^7$, which directly contradicts Kolpakov's theorem~\cite{Ko2012}. Consequently, $v_{fin}(P^7) \ne 2$.
		
		\textbf{Case 3: $v_{fin}(P^7) \ne 3$.}
		Assume that $v_{fin}(P^7) = 3$, and denote these finite vertices by $u, v, w$. Take any 3-dimensional face $F^3$ containing vertex $u$. Due to the parity requirement for the number of finite vertices in $F^3$, this face must contain exactly one of the remaining vertices, for example $v$. Hence, $u$ and $v$ again belong to a common 3-face and, as a consequence, to a common facet $H_1$. We perform the gluing $P' = P \cup_{H_1} \rho_{H_1}(P)$ along this facet $H_1$.
		
		Similar to Case 2, the vertices $u$ and $v$ will transition to interior points of edges and cease to be vertices of the polyhedron $P'$. Let us consider what will happen to the third vertex $w$. Two variants are possible:
		1) If $w \in H_1$, then it also transitions to an interior point of an edge. Then in $P'$ there will remain 0 finite vertices, which again contradicts Kolpakov's theorem.
		2) If $w \notin H_1$, then under reflection it is duplicated. In the polyhedron $P'$, exactly two finite vertices are formed: $w$ and its mirror copy $\rho_{H_1}(w)$. However, in Case 1 we already proved that a 7-dimensional right-angled polyhedron of finite volume cannot have exactly 2 finite vertices. We again arrive at a contradiction.
		
		Thus, configurations with 1, 2 and 3 finite vertices are impossible. The obtained contradictions finally prove that $v_{fin}(P^7) \ge 4$.
	\end{proof}
	
	\subsection{Bounds for finite vertices for higher dimensions}
	
	Similar to Lemma \ref{lem:alex_recurrence}, we formulate a lemma on the double counting of incidences for finite vertices.
	
	\begin{lemma} \label{lem:fin_recurrence}
		Let $P^n \in \mathcal{P}^n$. Then the minimal number of its finite vertices is bounded from below by the relation:
		$$v_{fin}(P^n) \ge \left\lceil \frac{a_{n-1}(P^n) \cdot v'_{fin}(P^n)}{n} \right\rceil$$
		where $v'_{fin}(P^n)$ is the minimal number of finite vertices in a facet of the polyhedron $P^n$.
	\end{lemma}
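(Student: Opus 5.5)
The plan is to mirror the proof of Lemma~\ref{lem:alex_recurrence}: count incidences between finite vertices of $P^n$ and facets of $P^n$ in two ways. Consider the set of pairs $(x, F)$ where $x$ is a finite vertex of $P^n$ and $F$ is a facet containing $x$.

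First, we count by vertices. As recalled in the preliminaries, the neighbourhood of a finite vertex of a right-angled polyhedron is a cone over an $(n-1)$-simplex, so each finite vertex lies in exactly $n$ facets. Hence the number of pairs equals $n\cdot v_{fin}(P^n)$.

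Second, we count by facets. Each facet $F$ is itself a right-angled polyhedron of dimension $n-1$. We must check that the finite vertices of $F$ are exactly the finite vertices of $P^n$ lying in $F$: a vertex of the face $F$ is a $0$-dimensional face of $P^n$ contained in $F$, and the finite/ideal distinction depends only on whether the point lies in $\mathbb{H}^n$ or on $\partial\mathbb{H}^n$. Hence the number of pairs equals $\sum_{F} v_{fin}(F)$. Bounding each summand below by $v'_{fin}(P^n)$ gives
\[
n\cdot v_{fin}(P^n)=\sum_{\dim F=n-1} v_{fin}(F)\ \ge\ a_{n-1}(P^n)\cdot v'_{fin}(P^n).
\]

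Dividing by $n$ and using that $v_{fin}(P^n)$ is an integer yields the ceiling bound. There is no real obstacle here. The only point needing care is the identification of the vertices of a facet with the vertices of $P^n$ lying in it, which follows from the face-lattice facts cited from~\cite{AVS}. The simplicity at finite vertices (exactly $n$ facets) is what fixes the denominator $n$, in place of the $2(n-1)$ used for ideal vertices.
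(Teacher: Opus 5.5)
Your proof is correct and is the same double-counting argument the paper uses: counting (finite vertex, facet) incidences gives $n\cdot v_{fin}(P^n)=\sum_{F} v_{fin}(F)\ge a_{n-1}(P^n)\cdot v'_{fin}(P^n)$, and integrality then gives the ceiling. Your extra check that the finite vertices of a facet are exactly the finite vertices of $P^n$ lying in it is a detail the paper leaves implicit.
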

	\begin{proof}
		Summing the number of finite vertices over all facets, we obtain $v_{fin}(P^n) \cdot n = \sum_{\dim F = n-1} v_{fin}(F) \ge a_{n-1}(P^n) \cdot v'_{fin}(P^n)$. Expressing $v_{fin}(P^n)$ and taking into account the integrality of the number of vertices, we arrive at the required inequality.
	\end{proof}
	
	Applying the proven lemma to the new base $v_{fin}(P^7) \ge 4$ and using the minimal number of facets $a_{n-1}(P^n)$ calculated in section 3, we obtain a cascade of new lower bounds for higher dimensions.
	
	\begin{theorem} \label{thm:higher_dims_fin}
		The number of finite vertices of a right-angled hyperbolic polyhedron of finite volume in dimensions $8 \le n \le 12$ is bounded from below by the values given in Table~\ref{tab:comparison_fin}.
	\end{theorem}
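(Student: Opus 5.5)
The plan is to run the double counting of Lemma~\ref{lem:fin_recurrence} inductively, starting from the base $v_{fin}(P^7)\ge 4$ of Theorem~\ref{thm:dim7_fin}. The key observation is that every facet of a polyhedron $P^n\in\mathcal{P}^n$ is itself a right-angled hyperbolic polyhedron of finite volume in $\mathbb{H}^{n-1}$. This holds because intersecting facets are orthogonal, so the dihedral angles of a facet are again $\pi/2$, and the facet has finite $(n-1)$-volume. Hence if $f_{n-1}$ denotes a proven lower bound for finite vertices in dimension $n-1$, then $v'_{fin}(P^n)\ge f_{n-1}$. By Kolpakov's theorem every $P^n$ with $n\ge 7$ has a finite vertex. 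A facet with no ideal vertex is compact, which is impossible for $n-1\ge 5$ by Potyagailo--Vinberg, so every facet lies in $\mathcal{P}^{n-1}$ or is at least covered by the same bounds. Either way the bound $f_{n-1}$ applies uniformly to all facets.

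For the number of facets we reuse the quantities from the proof of Theorem~\ref{thm:higher_dims}: $a_{n-1}(P^n)\ge 1+\nu_{n-1}$, with $\nu_7\ge 81$, $\nu_8\ge 287$, $\nu_9\ge 3978$, $\nu_{10}\ge 819\,674$, $\nu_{11}\ge 33\,431\,059\,632$. This gives $a_7(P^8)\ge 82$, $a_8(P^9)\ge 288$, $a_9(P^{10})\ge 3979$, $a_{10}(P^{11})\ge 819\,675$, and $a_{11}(P^{12})\ge 33\,431\,059\,633$. We then define
$$f_n=\left\lceil \frac{a_{n-1}(P^n)\, f_{n-1}}{n}\right\rceil$$
and compute successively:
\begin{itemize}
\item $f_8=\lceil 82\cdot 4/8\rceil=41$;
\item $f_9=\lceil 288\cdot 41/9\rceil=1312$;
\item $f_{10}=\lceil 3979\cdot 1312/10\rceil$;
\item $f_{11}$ and $f_{12}$ in the same manner.
\end{itemize}
These values populate Table~\ref{tab:comparison_fin}.

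Each step is valid because the right-hand side of Lemma~\ref{lem:fin_recurrence} is monotone in both $a_{n-1}$ and $v'_{fin}$. So substituting lower bounds for both factors yields a lower bound for $v_{fin}(P^n)$.

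The main obstacle is justifying that the facet bound applies, that is, $v'_{fin}(P^n)\ge f_{n-1}$ for every facet. One must check that each facet is a finite-volume right-angled polyhedron of dimension $n-1$ to which the previous bound applies. For the step $7\to 8$ this is where Theorem~\ref{thm:dim7_fin} is used for facets of $P^8$, and that theorem covers all finite-volume right-angled $7$-polyhedra. A secondary concern is the arithmetic for $n=11,12$, where the numbers are huge and must be computed carefully.
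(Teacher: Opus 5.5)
Your proposal is correct and follows the paper's proof. You iterate Lemma~\ref{lem:fin_recurrence} from the base $v_{fin}(P^7)\ge 4$, using the facet bounds $a_{n-1}(P^n)\ge 1+\nu_{n-1}$ from the proof of Theorem~\ref{thm:higher_dims}; your extra justification that every facet is a finite-volume right-angled, non-compact (by Potyagailo--Vinberg) polyhedron is a detail the paper leaves implicit, while the remaining arithmetic you deferred gives $522\,045$, $38\,900\,657\,762$ and $108\,374\,184\,117\,028\,860\,113$, matching Table~\ref{tab:comparison_fin}.
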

	
	\begin{proof}
		Let us perform sequential calculations using the values $a_{n-1}(P^n)$ from the proof of Theorem \ref{thm:higher_dims}:
		
		\textbf{Dimension $n=8$:} Using $a_7(P^8) \ge 82$ and $v_{fin}(P^7) \ge 4$, we obtain: 
		$$v_{fin}(P^8) \ge \left\lceil \frac{82 \cdot 4}{8} \right\rceil = 41.$$
		
		\textbf{Dimension $n=9$:} Using $a_8(P^9) \ge 288$ and $v_{fin}(P^8) \ge 41$:
		$$v_{fin}(P^9) \ge \left\lceil \frac{288 \cdot 41}{9} \right\rceil = 1312.$$
		
		\textbf{Dimension $n=10$:} Using $a_9(P^{10}) \ge 3979$ and $v_{fin}(P^9) \ge 1312$:
		$$v_{fin}(P^{10}) \ge \left\lceil \frac{3979 \cdot 1312}{10} \right\rceil = \lceil 522044.8 \rceil = 522045.$$
		
		\textbf{Dimension $n=11$:} Using $a_{10}(P^{11}) \ge 819675$ and $v_{fin}(P^{10}) \ge 522045$:
		$$v_{fin}(P^{11}) \ge \left\lceil \frac{819675 \cdot 522045}{11} \right\rceil = \lceil 38900657761.4 \dots \rceil = 38900657762.$$
		
		\textbf{Dimension $n=12$:} Using $a_{11}(P^{12}) \ge 33431059633$ and $v_{fin}(P^{11}) \ge 38900657762$:
		$$v_{fin}(P^{12}) \ge \left\lceil \frac{33431059633 \cdot 38900657762}{12} \right\rceil = 108374184117028860113.$$
		
		These values are summarized in Table 2.
	\end{proof}
	
	\begin{table}[h]
		\centering
		\renewcommand{\arraystretch}{1.4}
		\begin{tabular}{|c|c|c|}
			\hline
			Dimension $n$ & $\mathbf{v_{fin}(P^n) \ge}$ \\
			\hline
			7 & \textbf{4} \\
			8 & \textbf{41} \\
			9 & \textbf{1 312} \\
			10 & \textbf{522 045} \\
			11 & \textbf{38 900 657 762} \\
			12 & \textbf{108 374 184 117 028 860 113} \\
			\hline
		\end{tabular}
		\caption{New lower bounds on the number of finite vertices $v_{fin}(P^n)$.}
		\label{tab:comparison_fin}
	\end{table}
	
	\section*{Acknowledgments}
	The author expresses gratitude to A.\,Yu.~Vesnin for constant attention to the work and fruitful discussions.

\end{document}